\newtheorem{theorem}{Theorem}
\newtheorem{lemma}[theorem]{Lemma}
\newenvironment{proof}[1][Proof]{\noindent \textbf{#1.} }{\  \rule{0.5em}{0.5em}}
\begin{document}

\title{Continuity of reachable sets of restricted affine control systems}
\author{V\'{\i}ctor Ayala\thanks{%
Supported by Proyecto Fondecyt $n^{o}$ 1150292, Conicyt, Chile} \\
Universidad de Tarapac\'a\\
Instituto de Alta Investigaci\'on\\
Casilla 7D, Arica, Chile\\
and\\
Adriano Da Silva\\
Instituto de Matem\'atica,\\
Universidade Estadual de Campinas\\
Cx. Postal 6065, 13.081-970 Campinas-SP, Brasil.\\
}
\date{\today }
\maketitle

\begin{abstract}
In this paper we give a direct proof that for a restricted affine control
system on a connected manilfold $M$, the associated reachable sets up to
time $t$ varies contnuously with the Haudorff metric.
\end{abstract}

\textbf{Key words }Affine system, accesible sets, continuity and Hausdorff
metric

\bigskip
\textbf{2010 Mathematics Subject Classification }93B03, 93B99, 93C15

\section{Introduction}

Accesible sets of control system have been studied for many people. Just to
mention a few, the description of these class of sets have been analyzed by,
Darken \cite{Da}, Gronski \cite{Gr}, Lobry \cite{Lo} and Sussmann and
Jurdjevic \cite{Su}.

On the other hand, in his book \cite{Po} Pontryagin shows that for a
restricted linear systems on Euclidean spaces, the accessible sets deform
continuously with the Hausdorff metric. By a direct proof in this paper we
obtain the same results for a more general class of control systems.

Consider any restricted affine control system on a connected Riemannian $%
\mathcal{C}^{\infty }$-manifold $M,$ determined by the family of
differential equations 
\[
\Sigma _{\Omega }:\hspace{1cm}\dot{x}(t)=f_{0}(x(t))+%
\sum_{i=1}^{m}u_{i}(t)f_{i}(x(t)),\; \; \mbox{ where }\; \;u\in \mathcal{U}%
_{\Omega }.
\]%
Where $\mathcal{U}_{\Omega }:=\left \{ u\in L^{\infty }(\mathbb{R},\mathbb{R}%
^{m});\; \;u(t)\in \Omega \right \} $ is the class of admissible control
functions with $\Omega $ being a compact and convex subset of $\mathbb{R}%
^{m} $.

If $x\in M$ and $u\in \mathcal{U}_{\Omega},$ $\varphi (t,x,u)$ denotes the $\Sigma_{\Omega }$-solution satisfying $\varphi (0,x,u)=x.$ The reachable
set $\mathcal{R}_{\leq t,\Omega }(x)$\ of $\Sigma _{\Omega }$ is builded
with the points of $M$ which are possible to reach starting from the initial
condition $x$, through concatenation of $\Sigma _{\Omega }$-solutions in
nonnegative time less or equal than $t.$

It is well known that the map 
\[
(t,x,u)\in \mathbb{R}\times M\times \mathcal{U}_{\Omega }\mapsto \varphi
(t,x,u)\in M.
\]%
is continuous. Furthermore, the set $\mathcal{U}_{\Omega }$ is a compact
metrizable space in the weak* topology of $L^{\infty }(\mathbb{R},\mathbb{R%
}^{m})=L^{1}(\mathbb{R},\mathbb{R}^{m})^{\ast }$ (see for example \cite{Ka}).

In this paper we give a direct proof that for a restricted affine control
system $\Sigma _{\Omega }$\ on a connected manilfold $M$, the associated
reachable sets up to time $t$ varies continuously. Precisely, the map 
\[
(t,x, \Omega)\mapsto \mathcal{R}_{\leq t,\Omega }(x)\subset M
\]%
is continuous. 

In this case, the last variable belongs to the metric space $\left(\mathrm{Co}(\mathbb{R}^m), d_H\right)$ where

\[
\mathrm{Co}(\mathbb{R}^m):=\left \{ \Omega\subset
\mathbb{R}^{m}; \;\Omega\mbox{ is a non-empty compact convex subset}\right \}
\]
and $d_H$ is the Hausdorff metric. Moreover, $\mathcal{R}_{\leq t, \Omega}(x)\in \mathcal{C}(M)$ where $\left(\mathcal{C}(M), \varrho_H\right)$ is the metric space of all non-empty compact subsets of $M$ with the Hausdorff metric.  

As a consequence, any continuous functional $J$ defined on the accessible set $\mathcal{R}_{\leq t,\Omega }(x)$ has a minimum and maximum. In fact, $J\left(\mathcal{R}_{\leq t,\Omega }(x)\right)$ is
compact.

\section{Control affine systems}

Let $M$ be a connected Riemannian $\mathcal{C}^{\infty}$-manifold and $f_0,
f_1, \ldots, f_m\in \mathcal{X}^{\infty}(M)$ vector fields. A \textbf{%
control affine system} is the family of ordinary differential equations 
\[
\Sigma_{\Omega}:\hspace{1cm}\dot{x}(t)=f_0(x(t))+%
\sum_{i=1}^{m}u_i(t)f_i(x(t)), \; \; \mbox{ where }\; \;u\in \mathcal{U}%
_{\Omega}.
\]
The \textbf{set of the control functions} $\mathcal{U}_{\Omega}$ is defined
as 
\[
\mathcal{U}_{\Omega}:=\left \{u\in L^{\infty}(\mathbb{R}, \mathbb{R}^m); \;
\;u(t)\in \Omega \right \}
\]
with $\Omega$ being a compact and convex subset of $\mathbb{R}^m$. It is
well known that the set of the control functions is a compact metrizable
space in the weak* topology of $L^{\infty}(\mathbb{R}, \mathbb{R}^m)=L^1(%
\mathbb{R}, \mathbb{R}^m)^*$ (see for instance Proposition 1.14 of \cite{Ka}%
). For a given initial state $x\in M$ and $u\in \mathcal{U}_{\Omega}$ we
denote the solution of $\Sigma_{\Omega}$ by $\varphi(t, x, u)$. The curve $%
t\mapsto \varphi(t, x, u)$ is the only solution of $\Sigma_{\Omega}$
satisfying $\varphi(0, x, u)=x$ in the sense of Caratheod\'ory, that is, it
is an absolutely continuous curve that satisfies the corresponding integral
equation. Throughout we assume that all the solutions are defined in the
whole real line. Even though this assumption is in general restrictive, there are several cases where the assumption of completeness goes without loss of generality, such as linear systems on Lie groups and control affine systems on compact manifolds.

Moreover, the map 
\[
(t, x, u)\in \mathbb{R}\times M\times \mathcal{U}_{\Omega}\mapsto \varphi(t,
x, u)\in M
\]
is a continuous map (see for instance Theorem 1.1 of \cite{Ka}).

For a given state $x\in M$ we introduce the sets, 
\[
\mathcal{R}_{\leq t, \Omega}(x):=\left \{y\in M; \; \exists u\in \mathcal{U}%
_{\Omega}, s\in[0, t]\mbox{ with }\;y=\varphi(s, x, u)\right \}, \; \;t>0,
\]
\[
\mathcal{R}_{\Omega}(x):=\bigcup_{t>0}\mathcal{R}_{\leq t, \Omega}(x).
\]
The set $\mathcal{R}_{\leq t, \Omega}(x)$ is called the \textbf{set of
points reachable from $x$ up to time $t$}; the set $\mathcal{R}_{\Omega}(x)$
is called \textbf{the set of points reachable from $x$}.

By considering the set of the piecewise control functions $\mathcal{U}%
_{\Omega}^{\mathrm{PC}}\subset \mathcal{U}_{\Omega}$, the set 
\[
\mathcal{R}^{\mathrm{PC}}_{\leq t, \Omega}(x):=\left \{y\in M; \; \exists
u\in \mathcal{U}_{\Omega}^{\mathrm{PC}}, s\in[0, t]\mbox{ with }%
\;y=\varphi(s, x, u)\right \}
\]
satisfies, by Proposition 1.16 of \cite{Ka}, the following 
\[
\mathrm{cl}\left(\mathcal{R}_{\leq t, \Omega}^{\mathrm{PC}}(x)\right)=\mathcal{%
R}_{\leq t, \Omega}(x)\; \; \mbox{ for any }\; \;t>0, x\in M.
\]

Our aim in this paper is to show that the set of reachable points up to time 
$t$ varies continuously in the Hausdorff measure. In order to do that we
need to make some remarks. If $\Omega _{1}\subset \Omega _{2}\subset \mathbb{%
R}^{m}$ are compact convex subsets, we can consider the control affine
systems $\Sigma _{\Omega _{1}}$ and $\Sigma _{\Omega _{2}}$ with set of
control functions $\mathcal{U}_{\Omega _{1}}$ and $\mathcal{U}_{\Omega _{2}}$, respectively%
. Since by the very definition of such sets we have that $\mathcal{U}%
_{\Omega _{1}}\subset \mathcal{U}_{\Omega _{2}}$, the unicity of the
solutions imply that all the solutions of $\Sigma _{\Omega _{1}}$ are also
solutions of $\Sigma _{\Omega _{2}}$. Also, the set $\mathcal{U}_{\Omega
_{1}}$ is a compact subset of $\mathcal{U}_{\Omega _{2}}$ in the
weak*-topology.

Let us also remark that the sets 
\[
W_{u, \gamma}(x_1, \ldots, x_k):=\left \{u^{\prime }\in \mathcal{U}%
_{\Omega}; \, \left|\int_{\mathbb{R}}\langle u(s)-u^{\prime }(s),
x_i(s)\rangle \mathrm{ds}\right|<\gamma \mbox{ for } i=1, \ldots, k\right \}
\]
where $k\in \mathbb{N}$ and $x_i\in L^1(\mathbb{R}, \mathbb{R}^m)$ for $%
1\leq i\leq k$, form a subbasis for the weak*-topology (see \cite{Du}).

\section{Continuity}

\begin{theorem}
The map 
\[
(t, x, \Omega)\mapsto \mathcal{R}_{\leq t, \Omega}(x)
\]
is continuous in the Hausdorff measure.
\end{theorem}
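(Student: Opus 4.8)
The plan is to establish continuity of $(t,x,\Omega)\mapsto\mathcal{R}_{\leq t,\Omega}(x)$ in two logically separate pieces: joint continuity in $(t,x)$ for a fixed $\Omega$, and continuity in $\Omega$ uniformly on compact sets of $(t,x)$, then combine them by a triangle-inequality argument for $\varrho_H$. Throughout I would work with the piecewise-continuous controls $\mathcal{U}_\Omega^{\mathrm{PC}}$, since $\mathcal{R}^{\mathrm{PC}}_{\leq t,\Omega}(x)$ is dense in $\mathcal{R}_{\leq t,\Omega}(x)$ and the Hausdorff distance between two sets equals that between their closures; this lets me freely restrict and rescale controls in time without leaving the admissible class.

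For the $(t,x)$-continuity with $\Omega$ fixed, the key estimates are: (i) if $y\in\mathcal{R}_{\leq t,\Omega}(x)$ via $y=\varphi(s,x,u)$ with $s\le t$, then for $x'$ near $x$ the point $\varphi(s,x',u)$ lies in $\mathcal{R}_{\leq t,\Omega}(x')$ and is close to $y$ by continuity of the solution map; since $\mathcal{U}_\Omega$ is weak*-compact, continuity of $(s,x,u)\mapsto\varphi(s,x,u)$ on the compact set $[0,t]\times K\times\mathcal{U}_\Omega$ (for $K$ a compact neighborhood of $x$) is uniform in $u$, which gives one-sided Hausdorff control; (ii) for the time variable, $t'<t$ gives $\mathcal{R}_{\leq t',\Omega}(x)\subset\mathcal{R}_{\leq t,\Omega}(x)$, and conversely any $y=\varphi(s,x,u)$ with $s\le t$ is approximated by $\varphi(\min(s,t'),x,u)$, whose distance to $y$ is bounded by $\sup$ over $u$ and over the short time interval of the displacement of the flow — again uniform by compactness. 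Packaging these yields $\varrho_H\big(\mathcal{R}_{\leq t,\Omega}(x),\mathcal{R}_{\leq t',\Omega}(x')\big)\to 0$.

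The continuity in $\Omega$ is the substantive part. Fix $t$ and a compact $K\ni x$. Given $\Omega_n\to\Omega$ in $d_H$, I want $\varrho_H\big(\mathcal{R}_{\leq t,\Omega_n}(x),\mathcal{R}_{\leq t,\Omega}(x)\big)\to 0$. The inclusion-type half uses that for $u\in\mathcal{U}^{\mathrm{PC}}_{\Omega_n}$ one can build $u'\in\mathcal{U}^{\mathrm{PC}}_\Omega$ with $\|u(s)-u'(s)\|\le d_H(\Omega_n,\Omega)$ pointwise (take $u'(s)$ the nearest point of $\Omega$ to $u(s)$; this is measurable and piecewise-continuous when $u$ is, and admissible since $\Omega$ is convex hence the projection is well-defined); the resulting $L^\infty$-small (hence, on $[0,t]$, $L^1$-small) perturbation of the control, fed into the Caratheodory integral equation and closed up with Gronwall on the compact set $[0,t]\times K$, moves the trajectory endpoint by $O(d_H(\Omega_n,\Omega))$ uniformly. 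The reverse half is symmetric, approximating $\Omega$-controls by $\Omega_n$-controls via projection onto $\Omega_n$. Here the main obstacle, and the point needing the most care, is producing the approximating control inside the right admissible class while keeping it piecewise continuous and controlling it in $L^1[0,t]$ rather than merely weak*; the nearest-point projection onto a compact convex set, which is $1$-Lipschitz, is exactly what makes this work, and convexity of $\Omega$ (and of each $\Omega_n$) is used precisely here. A Gronwall estimate on the Caratheodory solutions — valid since the $f_i$ are smooth and hence locally Lipschitz, and the relevant trajectories stay in a fixed compact set for $t$, $x$, $\Omega$ ranging over compacta — then converts the control perturbation into the trajectory perturbation, and taking suprema over admissible controls (legitimate by weak*-compactness and the density of $\mathcal{U}^{\mathrm{PC}}$) yields the Hausdorff bound. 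Finally, continuity of the composite map at a point $(t_0,x_0,\Omega_0)$ follows from $\varrho_H(\mathcal{R}_{\leq t,\Omega}(x),\mathcal{R}_{\leq t_0,\Omega_0}(x_0))\le \varrho_H(\mathcal{R}_{\leq t,\Omega}(x),\mathcal{R}_{\leq t,\Omega_0}(x))+\varrho_H(\mathcal{R}_{\leq t,\Omega_0}(x),\mathcal{R}_{\leq t_0,\Omega_0}(x_0))$ and the two pieces above, with the first estimate uniform for $(t,x)$ near $(t_0,x_0)$.
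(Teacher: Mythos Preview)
Your proposal is correct and the overall architecture---separate continuity in $(t,x)$ and in $\Omega$, then a triangle inequality---matches the paper's. The $(t,x)$ part is essentially the same as the paper's Lemmas 3 and 4: uniform continuity of $(s,x,u)\mapsto\varphi(s,x,u)$ on the compact $[0,t]\times K\times\mathcal{U}_\Omega$.

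The $\Omega$-continuity argument, however, is genuinely different. The paper stays entirely inside the weak*-framework: it fixes an ambient $\widehat\Omega\supset\Omega$, uses uniform continuity of $\varphi$ in the weak* topology on $\mathcal{U}_{\widehat\Omega}$, covers $\mathcal{U}_\Omega$ by finitely many weak*-basic neighborhoods $W_{u_i,\gamma_i}$ centered at piecewise-constant controls, and then shows that perturbing $\Omega$ in $d_H$ lets one replace each $u_i$ by a nearby piecewise-constant control with values in $\Omega'$ that lands in the same weak* neighborhood. Your route is more direct and quantitative: the $1$-Lipschitz metric projection onto a compact convex set turns $d_H(\Omega,\Omega')$ into a pointwise $L^\infty$ bound on the control perturbation, and a Gronwall estimate on the compact set carrying all the relevant trajectories converts this into an $O(d_H(\Omega,\Omega'))$ bound on endpoints. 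What your approach buys is an explicit modulus of continuity (indeed local Lipschitz dependence on $\Omega$) and a cleaner use of convexity; its cost is that you must set up Gronwall for the Riemannian distance on a manifold, which is routine on compact sets but is a detail the paper sidesteps by invoking Kawan's weak*-continuity theorem as a black box. Both arguments ultimately need the same compactness input (all trajectories for nearby $(t,x,\Omega)$ sit in a fixed compact subset of $M$), and both need, for the final joint-continuity step, that the separate estimates are locally uniform in the remaining variables---a point you flag explicitly and the paper leaves implicit.
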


The theorem follows from the next three lemmas:

\begin{lemma}
The map $\Omega \rightarrow \mathcal{R}_{t, \Omega}(x)$ is continuous.
\end{lemma}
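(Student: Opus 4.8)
The goal is to show that $\Omega \mapsto \mathcal{R}_{\leq t,\Omega}(x)$ is continuous as a map $(\mathrm{Co}(\mathbb{R}^m), d_H) \to (\mathcal{C}(M), \varrho_H)$. Continuity in the Hausdorff metric splits into two one-sided estimates: given $\Omega$ and $\varepsilon > 0$, one must find $\delta > 0$ so that $d_H(\Omega, \Omega') < \delta$ forces (i) $\mathcal{R}_{\leq t,\Omega}(x) \subset N_\varepsilon(\mathcal{R}_{\leq t,\Omega'}(x))$ and (ii) $\mathcal{R}_{\leq t,\Omega'}(x) \subset N_\varepsilon(\mathcal{R}_{\leq t,\Omega}(x))$, where $N_\varepsilon$ denotes the $\varepsilon$-neighborhood in $M$. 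The natural strategy is: approximate an arbitrary target point by one reached via a \emph{piecewise constant} control (using the density result $\mathrm{cl}(\mathcal{R}^{\mathrm{PC}}_{\leq t,\Omega}(x)) = \mathcal{R}_{\leq t,\Omega}(x)$ quoted in the excerpt), then push that control into $\Omega'$ via the nearest-point projection, and finally invoke continuous dependence of solutions on the control to conclude the endpoints are close.

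\textbf{Upper semicontinuity of $\mathcal{R}$ — step (i).} Let $y \in \mathcal{R}_{\leq t,\Omega}(x)$, so $y = \varphi(s, x, u)$ for some $s \in [0,t]$, $u \in \mathcal{U}_\Omega$. By the density of piecewise-constant controls I may assume, up to an $\varepsilon/2$ error in $M$, that $u$ is piecewise constant with finitely many values $\omega_1, \dots, \omega_N \in \Omega$. Given $\delta$ with $d_H(\Omega,\Omega') < \delta$, let $\pi' \colon \mathbb{R}^m \to \Omega'$ be the metric projection onto the closed convex set $\Omega'$; then $|\omega_j - \pi'(\omega_j)| \le d(\omega_j, \Omega') \le \delta$ for each $j$. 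Define $u' \in \mathcal{U}_{\Omega'}^{\mathrm{PC}}$ by replacing each value $\omega_j$ with $\pi'(\omega_j)$ on the same time intervals. Then $u' \to u$ in the weak\textsuperscript{*} topology as $\delta \to 0$ — in fact $\|u - u'\|_\infty \le \delta$ on the bounded time interval $[0,t]$, which is far stronger — so by continuity of $(t,x,u) \mapsto \varphi(t,x,u)$, for $\delta$ small enough $\varphi(s, x, u')$ lies within $\varepsilon/2$ of $\varphi(s,x,u) = y$ in $M$. Since $\varphi(s,x,u') \in \mathcal{R}_{\leq t,\Omega'}(x)$, this gives $y \in N_\varepsilon(\mathcal{R}_{\leq t,\Omega'}(x))$. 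The only subtlety is uniformity: the modulus of continuity of $\varphi$ must be controlled uniformly over the finitely many breakpoints and over $s \in [0,t]$, which follows from joint continuity on the compact set $[0,t] \times \{x\} \times \mathcal{U}_\Omega$ together with the fact that the approximating controls stay in a fixed compact set $\mathcal{U}_{\Omega''}$ for some large fixed $\Omega'' \supset \Omega$.

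\textbf{Lower semicontinuity of $\mathcal{R}$ — step (ii), the main obstacle.} The reverse inclusion is the harder direction, because now the target point $y' = \varphi(s, x, u')$ is reached by a control $u' \in \mathcal{U}_{\Omega'}$ that need \emph{not} take values in $\Omega$, and projecting $\Omega'$-values onto $\Omega$ could, a priori, fail since $\Omega'$ is not assumed contained in $\Omega$. The key point is that $d_H(\Omega,\Omega') < \delta$ also gives $\Omega' \subset N_\delta(\Omega)$, hence the projection $\pi \colon \mathbb{R}^m \to \Omega$ moves every value of $u'$ by at most $\delta$; so after again passing to a piecewise-constant approximation of $u'$ (density applied in $\mathcal{U}_{\Omega'}$) and projecting onto $\Omega$, the resulting control lies in $\mathcal{U}_\Omega^{\mathrm{PC}}$ and is uniformly $\delta$-close to $u'$ on $[0,t]$, so its endpoint is within $\varepsilon$ of $y'$. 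The genuine obstacle is organizing all approximations so the final $\delta$ can be chosen \emph{independently of which} $y$ or $y'$ is being approximated and of $\Omega'$: one resolves this by fixing a compact $\Omega^* \supset \Omega \cup \{\Omega' : d_H(\Omega,\Omega') \le 1\}$ at the outset, working entirely inside the fixed compact control space $\mathcal{U}_{\Omega^*}$, and extracting a single uniform modulus of continuity for $\varphi$ on $[0,t] \times \{x\} \times \mathcal{U}_{\Omega^*}$; the density estimates are then applied for the fixed systems $\Sigma_\Omega$ and $\Sigma_{\Omega'}$ and the $\varepsilon/2$ splitting closes the argument. Combining (i) and (ii) yields $\varrho_H(\mathcal{R}_{\leq t,\Omega}(x), \mathcal{R}_{\leq t,\Omega'}(x)) < \varepsilon$, which is the claim.
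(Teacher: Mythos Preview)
Your argument is correct and shares the paper's skeleton: fix a larger compact convex set $\widehat\Omega$ (your $\Omega^*$) containing $\Omega$ and all nearby $\Omega'$, exploit uniform continuity of $(s,u)\mapsto\varphi_x(s,u)$ on the compact $[0,t]\times\mathcal{U}_{\widehat\Omega}$, and perturb controls between $\Omega$ and $\Omega'$ by moving each value by at most $\delta$. The execution differs. The paper first covers $\mathcal{U}_\Omega$ by finitely many weak* subbasis neighborhoods $W_{u_i,\gamma_i}(x_{i,1},\ldots,x_{i,k_i})$ centered at piecewise-constant $u_i$, extracts a Lebesgue number for this cover (its Claim~1), and then perturbs only the finitely many centers $u_i$ to $u_i'\in\mathcal{U}_{\Omega'}$; every point of $\mathcal{R}_{\leq t,\Omega}(x)$ is compared to one of these finitely many perturbed endpoints via the cover. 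You instead perturb each control individually via the metric projection onto $\Omega'$ (respectively $\Omega$), using that $\|u-\pi'\circ u\|_{L^\infty}\le\delta$ is strictly stronger than weak* closeness, so the uniform modulus on $\mathcal{U}_{\Omega^*}$ yields the trajectory estimate directly. This bypasses the finite-cover and Lebesgue-number machinery entirely; incidentally, since the projection onto a closed convex set is continuous, $\pi'\circ u$ is already measurable, so your preliminary piecewise-constant approximation in both steps is not strictly needed. What the paper's route buys is that it stays purely inside the weak* framework and makes explicit the finitely many $L^1$ test functions that control everything; what yours buys is a shorter and more transparent argument.
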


\begin{proof}
Fix $\Omega$ and consider $\widehat{\Omega}$ such that $\Omega \subset \mathrm{int}\,\widehat{\Omega}$. Since 
\[
(t, x, u)\in \mathbb{R}\times M\times \mathcal{U}_{\widehat{\Omega}}\mapsto
\varphi(t, x, u)\in M
\]
is a continuous map we have, by fixing $x\in M$, that the map 
\[
(s, u)\in[0, t]\times \mathcal{U}_{\widehat{\Omega}}\mapsto \varphi_x(t,
u):=\varphi(t, x, u)\in M
\]
is uniformly continuous.

By Proposition 1.6 of \cite{Ka}, for any $u\in \mathcal{U}_{\Omega}$ there
exists a piecewise constant function $u^{\prime }\in \mathcal{U}_{\Omega}^{%
\mathrm{PC}}$ such that 
\[
\varrho(\varphi_x(s, u), \varphi_x(s, u^{\prime }))<\varepsilon/2, \; \;%
\mbox{ for any }\; \;s\in [0, t].
\]
Being that $\mathcal{U}_{\Omega}$ is a compact subset of $\mathcal{%
U}_{\widehat{\Omega}}$ we have by continuity that there are piecewise
constant function $u_1, \ldots, u_m\in \mathcal{U}_{\Omega}$ and $\gamma_1,
\ldots, \gamma_m>0$ such that 
\[
\mathcal{U}_{\Omega}\subset \bigcup_{i=1}^m W_{u_i, \gamma_i}(x_{i, 1},
\ldots, x_{i, k_i}), \;\;\;\mbox{ for }x_{i, j}\in L^1(\mathbb{R}, \mathbb{R}^m),
\]
and for any $s\in[0, t]%
, u\in W_{u_i, \gamma_i}(x_{i, 1}, \ldots, x_{i, k_i})$  we have that
\[
\varrho(\varphi_x(s, u), \varphi_x(s, u_i))<\varepsilon/2, \; \;1\leq i\leq
m.
\]

\textbf{Claim 1:} There exists $\epsilon>0$ such that for any $u\in \mathcal{%
U}_{\Omega}$ there exists $i^*\in \{1, \ldots, m\}$ such that 
\[
W_{u, \epsilon}\left(\Delta \right)\subset W_{u_{i^*}, \gamma_{i^*}}(x_{i^*,
1}, \ldots, x_{i^*, k_{i^*}})
\]
where $\Delta:=\{x_{i, j}, 1\leq j\leq k_i, 1\leq i\leq m\}$.

For any $u\in \mathcal{U}_{\Omega}$ let $\delta_u>0$ such that $W_{v,
\epsilon_u}(\Delta)\subset W_{u_i, \gamma_i}(x_{i, 1}, \ldots, x_{i, k_i})$
for some $1\leq i\leq m$. Since $\mathcal{U}_{\Omega}$ is compact, there
exist $v_1, \ldots, v_l\in \mathcal{U}_{\Omega}$ and $\epsilon_1,
\ldots, \epsilon_l>0$ such that 
\[
\mathcal{U}_{\Omega}\subset \bigcup_{k=1}^l W_{v_k, \epsilon_k/2}(\Delta) \;
\; \; \mbox{ with }\; \; \; W_{v_k, \epsilon_k}(\Delta)\subset W_{u_i,
\gamma_i}(x_{i, 1}, \ldots, x_{i, k_i}),
\]
where $1\leq i\leq m$. Let then $0<\epsilon<\epsilon_k/2$ for all $k\in \{1,
\ldots, l\}$.

For any $u\in \mathcal{U}_{\Omega}$, let $\bar{u}\in W_{u, \epsilon}(\Delta)$%
. Consider $k\in \{1, \ldots, l\}$ such that $u\in W_{v_k,
\epsilon_k/2}(\Delta)$. Then 
\[
\left|\int_{\mathbb{R}}\langle \bar{u}(s)-v_k(s), x_{i, j}(s)\rangle \mathrm{%
ds}\right|\leq
\]
\[
\left|\int_{\mathbb{R}}\langle \bar{u}(s)-u(s), x_{i, j}(s)\rangle \mathrm{ds%
}\right|+\left|\int_{\mathbb{R}}\langle u(s)-v_k(s), x_{i, j}(s)\rangle%
\mathrm{ds}\right|<\epsilon+\epsilon_k/2<\epsilon_k,
\]
showing that $W_{u, \epsilon}(\Delta)\subset W_{v_k,
\epsilon_k}(\Delta)\subset W_{u_{i^*}, \gamma_i}(x_{i, 1}, \ldots, x_{i^*, k_{i^*}})$
for some $i^*\in \{1, \ldots, m\}$ as stated.

\bigskip

\textbf{Claim 2:} For any $\varepsilon>0$ there exists $\delta_1>0$ such
that if $\Omega^{\prime }$ and $\gamma \in(0, \delta_1)$ are such that $%
N_{\gamma}(\Omega^{\prime })\subset \mathrm{int}\,\widehat{\Omega}$ then 
\[
\Omega \subset N_{\gamma}(\Omega^{\prime })\Rightarrow \mathcal{R}_{\leq t,
\Omega}(x)\subset N_{\varepsilon}(\mathcal{R}_{\leq t, \Omega^{\prime }}(x)).
\]

Let $T>t$ be such that 
\[
\int_{\mathbb{R}\setminus[-T, T]}\left|x_{i, j}(s)\right|\mathrm{ds}<\frac{%
\epsilon}{2\mathrm{diam}\widehat{\Omega}}, \; \mbox{ for }1\leq j\leq k_i,
\;1\leq i\leq m.
\]
and denote by 
\[
M:=\max \left \{ \int_{-T}^T\left|x_{i, j}(s)\right|\mathrm{ds}, \; \;1\leq
i\leq k_i, 1\leq j\leq m \right \}.
\]
Consider $\delta_1>0$ be such that $\delta_1<\epsilon/2M$ for $1\leq i\leq m$
and let $\Omega^{\prime }$ and $\gamma \in (0, \delta_1)$ satisfying $\Omega
\subset N_{\gamma}(\Omega^{\prime })\subset \mathrm{int}\widehat{\Omega}$.

Since $u_i$ is a piecewise continuous, it assume finite values $c_{1, i}, \ldots,
c_{n_i, i}\in \Omega$ when restricted to $[-T, T]$. Let $c^{\prime }_{1, i}, \ldots, c^{\prime }_{n_i, i}\in \Omega^{\prime }$ be
such that $|c_{j, i}- c_{j, i}^{\prime }|<\gamma$ for $j=1, \ldots, n_i$.
Define 
\[
u^{\prime }_i(s):=\left \{%
\begin{array}{cc}
c^{\prime }_{i, j}, & \mbox{ if } u_i(s)=c_{i, j}, \; \; \mbox{ and }\; \; s%
\in[0, t] \\ 
p^{\prime }_i, & \mbox{ for } s\in \mathbb{R}\setminus [-T, T],%
\end{array}%
\right.
\]
where $p^{\prime }_i\in \Omega^{\prime }$ are arbitrary points. The
functions $u^{\prime }_i$ for $i=1, \ldots, m$ are piecewise constant
functions which implies that $u^{\prime }_i\in \mathcal{U}_{\Omega^{\prime
}} $. Moreover, for $1\leq j\leq k_i$ 
\[
\left|\int_{\mathbb{R}}\langle u_i(s)-u^{\prime }_i(s), x_{i, j}(s)\rangle 
\mathrm{ds}\right|
\]
\[
<\gamma \int_{-T}^T\left|x_{i, j}(s)\right| \mathrm{ds}+\mathrm{diam}\widehat{%
\Omega}\int_{\mathbb{R}\setminus[-T, T]}\left| x_{i, j}(s)\right|\mathrm{ds}%
<\epsilon<\gamma_i
\]

showing that $u^{\prime }_i\in W_{u_i, \gamma_i}(x_{i, 1}, \ldots, x_{i,
k_i})$.

\bigskip

Consider now $y\in \mathcal{R}_{\leq t, \Omega}(x)$ and let $u\in \mathcal{U}%
_{\Omega} $, $s\in [0, t]$ and $i\in \{1, \ldots, m\}$ such that 
\[
y=\varphi_{x}(s, u) \; \; \; \mbox{ with }\; \; \;u\in W_{u_i,
\gamma_i}(x_{i, 1}, \ldots, x_{i, k_i}).
\]
We have that 
\[
\varrho(\varphi_x(s, u^{\prime }_i), \varphi_x(s, u))\leq
\varrho(\varphi_x(s, u^{\prime }_i), \varphi_x(s, u_i))+\varrho(\varphi_x(s,
u_i), \varphi_x(s, u))<\varepsilon.
\]
Since $\varphi_x(s, u^{\prime }_i)\in \mathcal{R}_{\leq t, \Omega'}(x)$ we have that $y\in N_{\varepsilon}(\mathcal{R}_{\leq t,
\Omega^{\prime }}(x))$ and consequently that 
$$\mathcal{R}_{\leq t, \Omega}(x)\subset N_{\varepsilon}(\mathcal{R}_{\leq t, \Omega^{\prime }}(x))$$
as stated.

\textbf{Claim 3:} For any $\varepsilon>0$ there exists $\delta_2>0$ such
that $N_{\delta_2}(\Omega)\subset \mathrm{int}\widehat{\Omega}$ and 
\[
\Omega^{\prime }\subset N_{\delta_2}(\Omega)\Rightarrow \mathcal{R}_{\leq t,
\Omega^{\prime }}(x)\subset N_{\varepsilon}(\mathcal{R}_{\leq t, \Omega}(x)).
\]

Since $\mathcal{R}_{\leq t, \Omega^{\prime }}(x)=\mathrm{cl}\left(\mathcal{R}%
_{\leq t, \Omega^{\prime }}^{\mathrm{PC}}(x)\right)$ it is enough for us to
show that for any $\varepsilon>0$ there is $\delta_2>0$ such $%
N_{\delta_2}(\Omega)\subset \mathrm{int}\widehat{\Omega}$ and 
\[
\Omega^{\prime }\subset N_{\delta_2}(\Omega)\Rightarrow \mathcal{R}_{\leq t,
\Omega^{\prime }}^{\mathrm{PC}}(x)\subset N_{\varepsilon}\left(\mathcal{R}%
_{\leq t, \Omega}(x)\right)
\]

Take $\delta_2>0$ be such that $\delta_2<\epsilon/2M$ and $%
N_{\delta_2}(\Omega)\subset \mathrm{int}\,\widehat{\Omega}$. For any $%
\Omega^{\prime }\subset N_{\delta_2}(\Omega)$ let $z\in \mathcal{R}^{\mathrm{PC}}_{\leq t,
\Omega^{\prime }}(x)$ and $s\in[0, t]$, $u^{\prime }\in \mathcal{U}%
_{\Omega^{\prime }}^{\mathrm{PC}}$ such that $z=\varphi_x(s, u^{\prime })$.
If $c^{\prime }_1, \ldots, c_m^{\prime }$ are the values assumed by $u^{\prime }$
in $[-T, T]$, there are $c_1, \ldots, c_m\in \Omega$ such that $%
|c_i-c_i^{\prime }|<\delta_2$. Define the piecewise constant function $u\in%
\mathcal{U}_{\Omega}$ by 
\[
u(s):=\left \{%
\begin{array}{cc}
c_i, & \mbox{ if } u^{\prime }(s)=c^{\prime }_i \; \; \mbox{ and }\; \; s\in[%
0, t] \\ 
p, & \mbox{ if } s\in \mathbb{R}\setminus [-T, T],%
\end{array}%
\right.
\]
where $p\in \Omega$ is an arbitrary point. Therefore for $1\leq j\leq k_i$, $%
1\leq i\leq m$ we have 
\[
\left|\int_{\mathbb{R}}\langle u^{\prime }(s)-u(s), x_{i, j}(s)\rangle 
\mathrm{ds}\right|
\]
\[
\delta_2\int_{-T}^T\left|x_{i, j}(s)\right|\mathrm{ds}+\mathrm{diam}\widehat{%
\Omega}\int_{\mathbb{R}\setminus[-T, T]}\left|x_{i, j}(s)\right| \mathrm{ds}%
<\epsilon
\]
showing that $u^{\prime }\in W_{u, \epsilon}$ and implying that $u^{\prime
}\in W_{u_i, \gamma_i}(x_{i, 1}, \ldots, x_{i, k_i})$ for some $1\leq i\leq
m $.

Therefore, 
\[
\varrho(\varphi_x(s, u^{\prime }), \varphi_x(s, u_i))<\varepsilon
\]
showing that $z\in N_{\varepsilon}(\mathcal{R}_{\leq t, \Omega})$. Since $%
z\in \mathcal{R}_{\leq t, \Omega^{\prime }}^{\mathrm{PC}}(x)$ was arbitrary
we conclude that 
\[
\mathcal{R}_{t, \Omega^{\prime }}^{\mathrm{PC}}(x)\subset N_{\varepsilon}(%
\mathcal{R}_{t, \Omega}(x))
\]
as claimed.

\bigskip

\textbf{Claim 4:} The map $\Omega \mapsto \mathcal{R}_{\leq t, \Omega}(x)$
is continuous in the Hausdorff measure.
\bigskip

For given $\Omega$ and $\varepsilon>0$ let $\delta=\min \{ \delta_1,
\delta_2/2\}$ where $\delta_1, \delta_2$ are given in the claims 1. and 2.
We have that 
\[
d_H(\Omega, \Omega^{\prime })<\delta \; \; \; \Leftrightarrow \;\;\;\Omega \subset
N_{\delta}(\Omega^{\prime }) \mbox{ and }\Omega^{\prime }\subset
N_{\delta}(\Omega).
\]
By Claim 3. we get that 
\[
\Omega^{\prime }\subset N_{\delta}(\Omega) \Rightarrow \mathcal{R}_{\leq t,
\Omega^{\prime }}(x)\subset N_{\varepsilon}(\mathcal{R}_{\leq t, \Omega}(x)).
\]
Moreover, 
\[
\Omega^{\prime }\subset N_{\delta}(\Omega)\Rightarrow
N_{\delta}(\Omega^{\prime })\subset N_{2\delta}(\Omega)\subset
N_{\delta_2}(\Omega)\subset \mathrm{int}\widehat{\Omega}
\]
and so, Claim 2 implies that 
\[
\Omega \subset N_{\delta}(\Omega^{\prime })\Rightarrow \mathcal{R}_{\leq t,
\Omega}(x)\subset N_{\varepsilon}(\mathcal{R}_{\leq t, \Omega^{\prime }}(x))
\]
showing 
\[
d_H(\Omega, \Omega^{\prime })<\delta \Rightarrow \varrho_H(\mathcal{R}_{\leq
t, \Omega}(x), \mathcal{R}_{\leq t, \Omega^{\prime }}(x))<\varepsilon
\]
as stated.
\end{proof}

\begin{lemma}
The map $t\mapsto \mathcal{R}_{\leq t, \Omega}(x)$ is continuous.
\end{lemma}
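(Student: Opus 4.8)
The plan is to exploit two elementary facts. First, $t\mapsto\mathcal{R}_{\leq t,\Omega}(x)$ is monotone nondecreasing: if $s\le t$ then $[0,s]\subset[0,t]$, so by the definition of the reachable set $\mathcal{R}_{\leq s,\Omega}(x)\subset\mathcal{R}_{\leq t,\Omega}(x)$. Second, the solution curves $s\mapsto\varphi(s,x,u)$ admit a modulus of continuity in $s$ that is \emph{uniform in the control} $u\in\mathcal{U}_\Omega$; this is where the weak* compactness of $\mathcal{U}_\Omega$ enters. Concretely, fixing $x$ and setting $T:=t+1$, the map $(s,u)\in[0,T]\times\mathcal{U}_\Omega\mapsto\varphi(s,x,u)\in M$ is continuous on a compact metric space, hence uniformly continuous; specializing to a single control we get, for every $\varepsilon>0$, a $\delta\in(0,1)$ such that $|s_1-s_2|<\delta$ implies $\varrho(\varphi(s_1,x,u),\varphi(s_2,x,u))<\varepsilon$ for \emph{all} $u\in\mathcal{U}_\Omega$. (The same observation shows $\mathcal{R}_{\leq t,\Omega}(x)$ is the continuous image of the compact set $[0,t]\times\mathcal{U}_\Omega$, hence an element of $\mathcal{C}(M)$, so that $\varrho_H$ makes sense.)

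Next I would fix $t>0$ and $\varepsilon>0$, take $T$ and $\delta$ as above, and let $t'$ satisfy $|t-t'|<\delta$; since $\delta<1$ we have $t'\le t+1=T$. Assume without loss of generality that $t\le t'$ (the case $t'<t$ is handled by interchanging the roles of $t$ and $t'$, which is legitimate because $T$ dominates both). Monotonicity immediately gives $\mathcal{R}_{\leq t,\Omega}(x)\subset\mathcal{R}_{\leq t',\Omega}(x)\subset N_\varepsilon(\mathcal{R}_{\leq t',\Omega}(x))$, one of the two inclusions needed for $\varrho_H\big(\mathcal{R}_{\leq t,\Omega}(x),\mathcal{R}_{\leq t',\Omega}(x)\big)<\varepsilon$.

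For the reverse inclusion, take $y\in\mathcal{R}_{\leq t',\Omega}(x)$, say $y=\varphi(\tau,x,u)$ with $u\in\mathcal{U}_\Omega$ and $\tau\in[0,t']$. Put $\tau^*:=\min\{\tau,t\}\in[0,t]$, so that $\varphi(\tau^*,x,u)\in\mathcal{R}_{\leq t,\Omega}(x)$ and $|\tau-\tau^*|\le t'-t<\delta$. The uniform modulus of continuity then yields $\varrho\big(y,\varphi(\tau^*,x,u)\big)<\varepsilon$, so $y\in N_\varepsilon(\mathcal{R}_{\leq t,\Omega}(x))$. Hence $\mathcal{R}_{\leq t',\Omega}(x)\subset N_\varepsilon(\mathcal{R}_{\leq t,\Omega}(x))$, and combining the two inclusions gives $\varrho_H\big(\mathcal{R}_{\leq t,\Omega}(x),\mathcal{R}_{\leq t',\Omega}(x)\big)<\varepsilon$, which is the asserted continuity.

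The only genuine point requiring care — the "hard part" — is the passage from the joint continuity of $\varphi$ to a time-modulus of continuity independent of the control; this rests precisely on the compactness of $\mathcal{U}_\Omega$ in the weak* topology, and without it the argument collapses. Once that is in hand, the rest is just monotonicity together with the triangle inequality applied to $\varrho$.
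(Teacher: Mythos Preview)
Your proof is correct and follows essentially the same route as the paper's: monotonicity of $t\mapsto\mathcal{R}_{\leq t,\Omega}(x)$ handles one inclusion, and a time-modulus of continuity uniform in $u\in\mathcal{U}_\Omega$ (obtained from compactness of $[0,T]\times\mathcal{U}_\Omega$) handles the other. The only cosmetic difference is that you invoke uniform continuity on compact metric spaces as a black box and keep the same control $u$ when shifting time, whereas the paper carries out the finite-cover argument by hand and allows the control to vary within a patch $V_i$; your version is the cleaner of the two.
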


\begin{proof}
For any $u\in \mathcal{U}_{\Omega}$ there exists by continuity $\delta_u>0$
and $V_u$ a neighborhood of $u$ in $\mathcal{U}_{\Omega}$ such that 
\[
|s-t|<\gamma_u\; \mbox{ and }\;v\in V_u\Rightarrow \varrho(\varphi_x(t, u),
\varphi_x(s, v))<\varepsilon/2.
\]
Since $\mathcal{U}_{\Omega}$ is compact, there exist $V_1, \ldots, V_n$ such
that 
\[
\mathcal{U}_{\Omega}=\bigcup_{i=1}^n V_n.
\]
By taking $\gamma=\min_{1\leq i\leq n}\{ \gamma_{u_i}\}$ we have that for
any $s, s^{\prime }\in (t-\gamma/2, t+\gamma/2)$ and $u, u^{\prime }\in V_i$
that 
\begin{equation}  \label{metric}
\varrho(\varphi_x(s, u), \varphi_x(s^{\prime }, u^{\prime }))<\varepsilon,
\; \; \mbox{ for some }\; \;1\leq i\leq n.
\end{equation}

By taking $\delta=\gamma/2$ we have for $s\in(t-\delta, t+\delta)$ that

\begin{itemize}
\item[(i)] if $s\geq t$ then 
\[
\mathcal{R}_{\leq t, \Omega}(x)\subset \mathcal{R}_{\leq s,
\Omega}(x)\subset N_{\varepsilon}\left(\mathcal{R}_{\leq s,
\Omega}(x)\right).
\]
Also, if $z\in \mathcal{R}_{\leq s, \Omega}(x)$ we have that $%
z=\varphi(s^{\prime }, x, u)$ for some $s^{\prime }\in[0, s]$ and $u\in%
\mathcal{U}_{\Omega}$. Then

\begin{itemize}
\item[(a)] if $s^{\prime }\leq t$ we have $z\in \mathcal{R}_{\leq t,
\Omega}(x)\subset N_{\varepsilon}\left(\mathcal{R}_{\leq t,
\Omega}(x)\right) $;

\item[(b)] if $s^{\prime }> t$ we have that $s^{\prime }\in [t, s]\subset
(t-\delta, t+\delta)$ and so, by taking $V_i$ such that $u\in V_i$ equation (%
\ref{metric}) gives that 
\[
\varrho(\varphi_x(s^{\prime }, u), \varphi_x(t, u^{\prime }))<\varepsilon,
\; \; \; \mbox{ for any }\; \;u^{\prime }\in V_i
\]
implying that $z\in N_{\varepsilon}\left(\mathcal{R}_{\leq t,
\Omega}(x)\right)$.
\end{itemize}

Since $z\in \mathcal{R}_{\leq s, \Omega}(x)$ was arbitrary we conclude that 
\[
\mathcal{R}_{\leq s, \Omega}(x)\subset N_{\varepsilon}(\mathcal{R}_{\leq t,
\Omega}(x));
\]

\item[(ii)] if $s<t$ we have that 
\[
\mathcal{R}_{\leq s, \Omega}(x)\subset \mathcal{R}_{\leq t,
\Omega}(x)\subset N_{\varepsilon}\left(\mathcal{R}_{\leq t,
\Omega}(x)\right).
\]
Also, for any $z\in \mathcal{R}_{\leq t, \Omega}(x)$ we have that $%
z=\varphi(t^{\prime }, x, u)$ for some $t^{\prime }\in[0, t]$ and $u\in%
\mathcal{U}_{\Omega}$. Then

\begin{itemize}
\item[(c)] if $t^{\prime }\leq s$ we have $z\in \mathcal{R}_{\leq s,
\Omega}(x)\subset N_{\varepsilon}\left(\mathcal{R}_{\leq s,
\Omega}(x)\right) $;

\item[(d)] if $t^{\prime }> s$ we have that $t^{\prime }\in [s, t]\subset
(t-\delta, t+\delta)$ and so, by taking $V_i$ such that $u\in V_i$ equation (%
\ref{metric}) gives that 
\[
\varrho(\varphi_x(t^{\prime }, u), \varphi_x(t, u^{\prime }))<\varepsilon,
\; \; \; \mbox{ for any }\; \;u^{\prime }\in V_i
\]
implying that $z\in N_{\varepsilon}\left(\mathcal{R}_{\leq s,
\Omega}(x)\right)$.
\end{itemize}

Since $z\in \mathcal{R}_{\leq t, \Omega}(x)$ was arbitrary we conclude that 
\[
\mathcal{R}_{\leq t, \Omega}(x)\subset N_{\varepsilon}(\mathcal{R}_{\leq s,
\Omega}(x));
\]
\end{itemize}

Therefore, if $s\in(t-\delta, t+\delta)$ we have that 
\[
\mathcal{R}_{\leq t, \Omega}(x)\subset N_{\varepsilon}(\mathcal{R}_{\leq s,
\Omega}(x)) \; \; \; \mbox{ and }\; \; \; \mathcal{R}_{\leq s,
\Omega}(x)\subset N_{\varepsilon}(\mathcal{R}_{\leq t, \Omega}(x))
\]
if and only if 
\[
\varrho_H\left(\mathcal{R}_{\leq t, \Omega}(x), \mathcal{R}_{\leq s,
\Omega}(x)\right)<\varepsilon
\]
showing the result.
\end{proof}

\begin{lemma}
The map $x\mapsto \mathcal{R}_{\leq t, \Omega}(x)$ is continuous.
\end{lemma}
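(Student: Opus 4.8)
The plan is to follow the same strategy used in the proofs of the previous two lemmas, exploiting the joint continuity of the solution map and the compactness of $\mathcal{U}_\Omega$. Fix $t>0$, $\Omega$, and a base point $x\in M$. By the joint continuity of $(s,y,u)\mapsto\varphi(s,y,u)$ on the compact set $[0,t]\times K\times\mathcal{U}_\Omega$, where $K$ is a compact neighborhood of $x$, the map is uniformly continuous there; hence given $\varepsilon>0$ there is $\delta>0$ such that $\varrho(y,x)<\delta$ implies $\varrho(\varphi_y(s,u),\varphi_x(s,u))<\varepsilon$ for all $s\in[0,t]$ and all $u\in\mathcal{U}_\Omega$ simultaneously (this is where the compactness of $\mathcal{U}_\Omega$ in the weak* topology is essential — it converts a pointwise-in-$u$ estimate into a uniform one).

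With such a $\delta$ in hand, the rest is a direct comparison of the two reachable sets. First I would show $\mathcal{R}_{\leq t,\Omega}(x)\subset N_\varepsilon\bigl(\mathcal{R}_{\leq t,\Omega}(y)\bigr)$: given $z\in\mathcal{R}_{\leq t,\Omega}(x)$, write $z=\varphi_x(s,u)$ with $s\in[0,t]$, $u\in\mathcal{U}_\Omega$; then $\varphi_y(s,u)\in\mathcal{R}_{\leq t,\Omega}(y)$ and $\varrho(z,\varphi_y(s,u))=\varrho(\varphi_x(s,u),\varphi_y(s,u))<\varepsilon$, so $z\in N_\varepsilon\bigl(\mathcal{R}_{\leq t,\Omega}(y)\bigr)$. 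By symmetry in the roles of $x$ and $y$ (the uniform estimate was symmetric), the reverse inclusion $\mathcal{R}_{\leq t,\Omega}(y)\subset N_\varepsilon\bigl(\mathcal{R}_{\leq t,\Omega}(x)\bigr)$ holds as well. Together these give $\varrho_H\bigl(\mathcal{R}_{\leq t,\Omega}(x),\mathcal{R}_{\leq t,\Omega}(y)\bigr)<\varepsilon$ whenever $\varrho(x,y)<\delta$, which is continuity of $x\mapsto\mathcal{R}_{\leq t,\Omega}(x)$.

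The main obstacle is the passage from pointwise to uniform continuity in the control variable: a priori the constant $\delta$ produced by continuity of $\varphi$ at a fixed $(s,x,u)$ could depend on $u$, and $\mathcal{U}_\Omega$ is infinite-dimensional, so one cannot take a naive minimum. The resolution is exactly as in Lemma 4 above: cover $[0,t]\times\mathcal{U}_\Omega$ by finitely many product neighborhoods on which $\varphi$ oscillates by less than $\varepsilon$, using that $[0,t]\times\mathcal{U}_\Omega$ is compact (product of a compact interval with the weak*-compact metrizable space $\mathcal{U}_\Omega$), and extract a single $\delta$ for the state variable. One minor technical point to handle carefully is that the neighborhood $K$ of $x$ in $M$ must be chosen small enough that all solutions $\varphi_y(\cdot,u)$ with $y\in K$ remain defined on $[0,t]$; under the standing completeness assumption this is automatic, so no extra argument is needed. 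Modulo that remark, the proof is the straightforward analogue of the preceding two lemmas, and combining all three lemmas with the triangle inequality for $\varrho_H$ yields joint continuity of $(t,x,\Omega)\mapsto\mathcal{R}_{\leq t,\Omega}(x)$, proving the theorem.
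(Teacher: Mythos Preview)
Your proposal is correct and follows essentially the same approach as the paper: both use the compactness of $[0,t]\times\mathcal{U}_\Omega$ (together with the joint continuity of the solution map) to extract a single $\delta>0$ such that $\varrho(x,y)<\delta$ implies $\varrho(\varphi_x(s,u),\varphi_y(s,u))<\varepsilon$ uniformly in $(s,u)$, and then compare the two reachable sets pointwise to obtain the Hausdorff estimate. Your version is in fact slightly more explicit than the paper's about why the uniform $\delta$ exists (introducing the compact neighborhood $K$ and invoking uniform continuity on $[0,t]\times K\times\mathcal{U}_\Omega$), but the argument is the same.
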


\begin{proof}
Let $x\in M$ and $t>0$ fixed and consider $\varepsilon>0$. By continuity of
the solutions and compacity of $[0, t]\times \mathcal{U}_{\Omega}$ we can
find $\delta>0$ such that 
\[
y\in B(x, \delta)\Rightarrow \varrho(\varphi_{s, u}(x), \varphi_{s,
u}(y))<\varepsilon, \; \; \mbox{ for all }(s, u)\in [0, t]\times \mathcal{U}%
_{\Omega}\; \;
\]
where $\varphi_{t, u}(x):=\varphi(t, x, u)$. Then, for $z\in \mathcal{R}%
_{\leq t, \Omega}(x)$ let $s\in [0, t]$ and $u\in \mathcal{U}_{\Omega}$ such
that $z=\varphi_{s, u}(x)$. If $y\in B(x, \delta)$ we have by above that $%
\varrho(\varphi_{s, u}(x), \varphi_{s, u}(y))<\varepsilon$ showing that $%
z\in \mathcal{R}_{\leq t, \Omega}(y)$ and implying that 
\[
\mathcal{R}_{\leq t, \Omega}(x)\subset N_{\varepsilon}\left(\mathcal{R}%
_{\leq t, \Omega}(y)\right).
\]
In an analogous way we can show that 
\[
\mathcal{R}_{\leq t, \Omega}(y)\subset N_{\varepsilon}\left(\mathcal{R}%
_{\leq t, \Omega}(x)\right)
\]
and so 
\[
\varrho(x, y)<\delta \Rightarrow \varrho_H\left(\mathcal{R}_{\leq t,
\Omega}(x), \mathcal{R}_{\leq t, \Omega}(y)\right)
\]
concluding the proof.
\end{proof}

\bigskip
Now we are able to prove our main result.

\begin{theorem}
The map $(t, x, \Omega)\mapsto \mathcal{R}_{\leq t, \Omega}(x)$ is a
continuous map.
\end{theorem}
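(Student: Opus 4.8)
The plan is to establish joint continuity of $(t,x,\Omega)\mapsto\mathcal{R}_{\leq t,\Omega}(x)$ at an arbitrary point $(t_0,x_0,\Omega_0)$ by combining the three lemmas above through the triangle inequality for $\varrho_H$, the one genuine issue being that the one–variable estimates must be made uniform in the other two variables. First I would fix $\varepsilon>0$ and a compact convex set $\widehat{\Omega}$ with $N_1(\Omega_0)\subset\mathrm{int}\,\widehat{\Omega}$; then every $\Omega$ with $d_H(\Omega,\Omega_0)<1/2$ satisfies $\Omega\subset\mathrm{int}\,\widehat{\Omega}$, so all the systems $\Sigma_\Omega$ that occur lie inside the single system $\Sigma_{\widehat{\Omega}}$, whose control set $\mathcal{U}_{\widehat{\Omega}}$ is weak*-compact and contains each $\mathcal{U}_\Omega$. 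I would also restrict to $t\in[0,t_0+1]$ and $x\in\overline{B(x_0,1)}$. The point of this reduction is that $(s,y,u)\mapsto\varphi(s,y,u)$ is continuous, hence uniformly continuous, on the fixed compact set $[0,t_0+1]\times\overline{B(x_0,1)}\times\mathcal{U}_{\widehat{\Omega}}$, and this is the single source of all the uniformity invoked below.

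Next I would write, for $(t,x,\Omega)$ in that neighbourhood,
\[
\varrho_H\big(\mathcal{R}_{\leq t,\Omega}(x),\mathcal{R}_{\leq t_0,\Omega_0}(x_0)\big)\leq \varrho_H\big(\mathcal{R}_{\leq t,\Omega}(x),\mathcal{R}_{\leq t,\Omega}(x_0)\big)+\varrho_H\big(\mathcal{R}_{\leq t,\Omega}(x_0),\mathcal{R}_{\leq t,\Omega_0}(x_0)\big)+\varrho_H\big(\mathcal{R}_{\leq t,\Omega_0}(x_0),\mathcal{R}_{\leq t_0,\Omega_0}(x_0)\big),
\]
and bound each of the three terms by $\varepsilon/3$. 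The third term is exactly the content of the lemma on continuity in $t$, applied at the fixed pair $(x_0,\Omega_0)$: it is $<\varepsilon/3$ once $|t-t_0|<\eta$ for a suitable $\eta$. For the second term I would re-run the proof of the lemma on continuity in $\Omega$ at the fixed point $x_0$ and the fixed $\Omega_0$, but taking its uniform–continuity input on $[0,t_0+1]\times\mathcal{U}_{\widehat{\Omega}}$ rather than on $[0,t]\times\mathcal{U}_{\widehat{\Omega}}$; then the auxiliary data (the piecewise constant controls $u_i$, the functions $x_{i,j}$, the time $T$, and the constants) are produced once and for all, and the resulting threshold $\delta>0$ with $d_H(\Omega,\Omega_0)<\delta\Rightarrow\varrho_H(\mathcal{R}_{\leq t,\Omega}(x_0),\mathcal{R}_{\leq t,\Omega_0}(x_0))<\varepsilon/3$ is valid simultaneously for all $t\in[0,t_0+1]$. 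For the first term I would likewise re-run the proof of the lemma on continuity in $x$, whose only ingredients are the uniform continuity of $\varphi$ and the compactness of $[0,t]\times\mathcal{U}_\Omega$; replacing that by the fixed compact set $[0,t_0+1]\times\mathcal{U}_{\widehat{\Omega}}$ yields a single radius $\delta'>0$ with $\varrho(x,x_0)<\delta'\Rightarrow\varrho_H(\mathcal{R}_{\leq t,\Omega}(x),\mathcal{R}_{\leq t,\Omega}(x_0))<\varepsilon/3$ for every $t\leq t_0+1$ and every $\Omega\subset\widehat{\Omega}$.

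Combining the three bounds: taking the neighbourhood of $(t_0,x_0,\Omega_0)$ to be $\{\,|t-t_0|<\eta,\ \varrho(x,x_0)<\delta',\ d_H(\Omega,\Omega_0)<\min(\delta,1/2)\,\}$, the displayed inequality gives $\varrho_H(\mathcal{R}_{\leq t,\Omega}(x),\mathcal{R}_{\leq t_0,\Omega_0}(x_0))<\varepsilon$, so the map is continuous at $(t_0,x_0,\Omega_0)$; since the point was arbitrary, it is continuous everywhere. (One should also remark, as in the introduction, that $\mathcal{R}_{\leq t,\Omega}(x)$ is a nonempty compact subset of $M$, so that the statement makes sense in $(\mathcal{C}(M),\varrho_H)$: compactness follows from the continuity of $\varphi$ and the compactness of $[0,t]\times\mathcal{U}_\Omega$.)

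The main obstacle — and essentially the only real point — is precisely this uniformity: separate continuity in each of $t$, $x$ and $\Omega$ does not by itself yield joint continuity, so the three lemmas cannot be used as black boxes. The reduction to the ambient compact system $\Sigma_{\widehat{\Omega}}$, to the compact time interval $[0,t_0+1]$, and to the compact neighbourhood $\overline{B(x_0,1)}$ of $x_0$ is exactly what forces the constants in (the proofs of) those lemmas to be uniform in the remaining variables, and a short inspection of each proof confirms that nothing beyond the uniform continuity of $\varphi$ on a compact set and the weak*-compactness of $\mathcal{U}_{\widehat{\Omega}}$ was ever used.
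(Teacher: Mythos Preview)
Your approach is correct and uses the same triangle-inequality decomposition as the paper. In fact you are more careful: the paper's own proof of this theorem simply writes down the three-term estimate and declares that ``the result follows from the above lemmas,'' treating them as black boxes and ignoring exactly the uniformity issue you identify. Your device of passing to a fixed ambient system $\Sigma_{\widehat{\Omega}}$, a fixed time window $[0,t_0+1]$, and a fixed neighbourhood $\overline{B(x_0,1)}$, and then re-reading the proofs of the three lemmas against that single compact domain, is the correct way to close this gap; the paper leaves it implicit.
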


\begin{proof}
Since 
\[
\varrho _{H}\left( \mathcal{R}_{\leq t^{\prime },\Omega ^{\prime
}}(x^{\prime }),\mathcal{R}_{t,\Omega }(x)\right) \leq \varrho _{H}\left( 
\mathcal{R}_{\leq t^{\prime },\Omega ^{\prime }}(x^{\prime }),\mathcal{R}%
_{\leq t^{\prime },\Omega ^{\prime }}(x)\right) 
\]%
\[
+\varrho _{H}\left( \mathcal{R}_{\leq t^{\prime },\Omega ^{\prime }}(x),%
\mathcal{R}_{\leq t^{\prime },\Omega }(x))\right) +\varrho _{H}\left( 
\mathcal{R}_{\leq t^{\prime },\Omega }(x),\mathcal{R}_{\leq t,\Omega
}(x)\right) 
\]%
the result follows from the above lemmas.
\end{proof}

\end{document}